\newtheorem{theorem}{Theorem}
\newcommand{\E}{\mathbb{E}}
\begin{document}
\bibliographystyle{plainnat}

\title[Monotonicity of $C(a+\beta\sqrt{a},a)$]%
{A short note on the monotonicity of the Erlang C formula in the Halfin-Whitt regime}

\author[B. D'Auria]{Bernardo D'Auria}
\address{Madrid University Carlos III,
Statistic Department
Avda. Universidad, 30, 28911 Legan\'es (Madird), Spain}
\email{bernardo.dauria.uc3m.es}

\thanks{%
%I'd like to thank Johan van Leeuwaarden for the nice discussions we had about this problem
%and the two anonymous referees that helped in simplifying the proof.
%
This research has been partially supported by the Spanish Ministry of Education and
Science Grants MTM2010-16519, SEJ2007-64500 and RYC-2009-04671.%
}
\subjclass{60K25}
\keywords{Halfin-Whitt regime, Erlang C formula.}
\renewcommand{\subjclassname}{\textup{2000} Mathematics Subject Classification}

\date{\today}

\begin{abstract} {
We prove a monotonicity condition satisfied by the Erlang C formula when computed in the
Halfin-Whitt regime. This property was recently conjectured in
\cite{janssen:vanleeuwaarden:zwart:2011}.
}
\end{abstract}
\maketitle
Recently, there has been a renewed interest in the Erlang C formula
\begin{equation}
 C(n,a) = 
\frac{a^n}{n!(1-\rho)}
\left(\sum_{k=0}^{n-1} \frac{a^k}{k!} + \frac{a^n}{n!(1-\rho)}\right)^{-1} \ , 
\end{equation}
that gives the probability of waiting for an arriving customers to an $M/M/n$ system
whose traffic intensity is $\rho = a/n$. The value $a$ is the \text{offered load}, that is
the ratio between the arrival rate $\lambda$ and the service rate $\mu$.

The Erlang C formula founds application in the dimensioning of large call servers, i.e. when 
the traffic intensity approaches the instability region ($\rho \to 1$) while
the system keeps high its efficiency by accordingly increasing the number of
servers ($n\to\infty$).
If the scaling is done in an appropriate way, the limiting system still shows a non degenerate
behavior. 

This limit is known as the Halfin-Whitt regime, from \cite{halfin:whitt:1981}, and it requires that
the scaling is done via the \emph{square-root staffing principle}, see also
\cite{borst:mandelbaum:reiman:2004}. The idea is to let the number of servers increase
more than linearly with respect to the offered load $a$ by adding a term proportional to the square
root of $a$, i.e.
\begin{equation}\label{eq:n.fun.a}
n(a) = a + \beta \sqrt{a} \ .
\end{equation}

In the scaling procedure the number of servers is allowed to take not integer values,
hence it is useful to consider the extended version the Erlang C formula to the positive real
numbers, see \cite{jagers:vandoorn:1986},
\begin{equation}\label{C}
 C(s,a) = \left( \int_0^\infty (1+t)^s \frac{a t}{1+t} e^{-a t} \ dt  \right)^{-1} \ ,
\end{equation}
that is valid for $0 < a < s$. 
In \cite{halfin:whitt:1981}, it is shown that
\begin{equation}\label{C.star}
C_*(\beta) 
= \lim_{a\to\infty} C(a + \beta \sqrt{a},a) 
= \left(1+\beta \frac{\Phi(\beta)}{\phi(\beta)}\right)^{-1}
\end{equation}
with $\Phi$ and $\phi$ being respectively the distribution and the density functions of a standard
Normal random variable. 

As observed in \cite{janssen:vanleeuwaarden:zwart:2011}, the value $C_*(\beta) $ in (\ref{C.star})
can be used as a first approximation of $C(a + \beta \sqrt{a},a)$ for large $a$, and eventually as a
dimensioning tool for large call centers as shown in \cite{borst:mandelbaum:reiman:2004}, and
supported by \cite{janssen:vanleeuwaarden:zwart:2009}.

In particular in \cite{janssen:vanleeuwaarden:zwart:2009}, it has been conjectured that 
for any value of $\beta$ the function $C(a + \beta \sqrt{a},a)$ is decreasing, that is the limit
in (\ref{C.star}) is approached from above and in particular the value of $C_*(\beta)$ is a 
performance lower bound for any system with offered load $a<1$.

The purpose of this note is to prove that indeed this conjecture holds true. The main tool of the
proof is in realizing that the function $1/C(a + \beta \sqrt{a},a)$ can be written in
term of the moments of some special random variables that are proved to be stochastically ordered.
It is interesting to note that even if the role of these random variables is crucial in the proof we
were not able to give a \emph{physical} interpretation to them. Probably if this could be done it
would add additional insights to the structure of the function $C(a + \beta \sqrt{a},a)$ and in
general to the asymptotic queueing system in the Halfin-Whitt regime.

\begin{theorem}
The function $C(a+ \beta \sqrt{a},a)$ is strictly decreasing in $a$ for any fixed $\beta>0$.
In particular, for any $a>0$, $C(a + \beta \sqrt{a},a) > C_*(\beta)$.
\end{theorem}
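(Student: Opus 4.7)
The plan is to prove the equivalent statement that $K(a) := 1/C(a+\beta\sqrt{a}, a)$ is strictly increasing in $a$, by recasting it as an integral whose integrand is manifestly pointwise increasing in $a$. First, with $s := a + \beta\sqrt{a}$, the algebraic decomposition $at = a(1+t) - a$ combined with the identity $s\,J(a) - a\int_0^\infty (1+t)^s e^{-at}\,dt = 1$ (obtained by integrating $\frac{d}{dt}[(1+t)^s e^{-at}]$ over $[0,\infty)$, the boundary giving $-1$) transforms the Jagers--van Doorn formula (\ref{C}) into
\begin{equation*}
  K(a) = 1 + (s-a)\, J(a) = 1 + \beta\sqrt{a}\, J(a),
  \qquad J(a) := \int_0^\infty (1+t)^{s-1} e^{-at}\, dt.
\end{equation*}

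Second, I would perform two successive changes of variable in $J$. Setting $u = \log(1+t)$ rewrites the exponent as $us - a(e^u - 1) = -a\,g(u) + \beta\sqrt{a}\, u$ with $g(u) := e^u - 1 - u \ge 0$; the subsequent Halfin--Whitt scaling $v = \sqrt{a}\, u$ yields
\begin{equation*}
  \sqrt{a}\, J(a) = \int_0^\infty \exp\!\bigl[-a\, g(v/\sqrt{a}) + \beta v\bigr]\, dv.
\end{equation*}
The key observation is elementary: expanding $g$ in Taylor series,
\begin{equation*}
  a\, g(v/\sqrt{a}) = \frac{v^2}{2} + \sum_{k\ge 3} \frac{v^k}{k!\, a^{k/2-1}},
\end{equation*}
every summand with $k \ge 3$ is positive and strictly decreasing in $a$ (since $k/2 - 1 \ge 1/2 > 0$). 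Hence $a\, g(v/\sqrt{a})$ is itself strictly decreasing in $a$ for each fixed $v > 0$, the integrand above is strictly increasing in $a$, and therefore $K(a) = 1 + \beta\sqrt{a}\, J(a)$ is strictly increasing for $\beta > 0$.

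To match the stochastic-ordering language advertised in the introduction, expand $e^{\beta v} = \sum_{k\ge 0}(\beta v)^k/k!$ to rewrite $K(a) - 1 = \sum_{k\ge 0} \frac{\beta^{k+1}}{k!}\, m_k(a)$ with $m_k(a) := \int_0^\infty v^k\, e^{-a\, g(v/\sqrt{a})}\, dv$. The pointwise monotonicity of the density in $a$ shows that each moment $m_k(a)$ is increasing in $a$, and applying the same observation to the likelihood ratio $\exp[a\,g(v/\sqrt{a}) - a'\,g(v/\sqrt{a'})]$ (which is a positive, increasing function of $v$ whenever $a<a'$) identifies the normalized probability densities $e^{-a\, g(v/\sqrt{a})}/Z_a$ as monotone in likelihood ratio, so the associated random variables are stochastically ordered. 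The bound $C(a+\beta\sqrt{a}, a) > C_*(\beta)$ follows immediately from the strict monotonicity together with the known limit $K(a) \to 1/C_*(\beta)$. I do not foresee a real obstacle: once the auxiliary function $g(u) = e^u - 1 - u$ is produced by the logarithmic substitution, the monotonicity reduces to term-by-term inspection of its Taylor series, with no delicate inequality required.
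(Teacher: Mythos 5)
Your proof is correct, and at its core it is the same argument as the paper's, reached by a more analytic route. The paper introduces the density $g(t,a)=at\,e^{-at}(1+t)^{a-1}$, sets $Y_a=(1+X_a)^{\sqrt a}$, writes $1/C(a+\beta\sqrt a,a)=\E[Y_a^\beta]$, and proves that $\{Y_a\}$ is stochastically increasing by showing the tail $\bar F(y,a)=y^{\sqrt a}e^{-a(y^{1/\sqrt a}-1)}$ is increasing in $a$, via the Taylor series of $h(x)=x+x^2(1-e^{1/x})$. Your representation $1/C=1+\beta\sqrt a\,J(a)=1+\beta\int_0^\infty e^{\beta v-a g(v/\sqrt a)}\,dv$ is exactly the tail-integration form of that moment identity: with $v=\log y$ one has $e^{-a g(v/\sqrt a)}=\bar F(e^v,a)$, and your term-by-term inspection of $a\,g(v/\sqrt a)=v^2/2+\sum_{k\ge 3}v^k/(k!\,a^{k/2-1})$ is the same series as the paper's $h(x)=-\sum_{n\ge 0}x^{-n}/(n+2)!$ evaluated at $x=\sqrt a/v$. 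What your route buys: the integration-by-parts identity plus the substitutions $u=\log(1+t)$ and $v=\sqrt a\,u$ produce the key exponential directly, with no need to check that $g(t,a)$ integrates to one or to compute the tail of $Y_a$, and the same formula also yields the limit $\sqrt a\,J(a)\to\Phi(\beta)/\phi(\beta)$ by dominated convergence, so the bound $C(a+\beta\sqrt a,a)>C_*(\beta)$ follows from one representation; the paper's route buys the probabilistic (stochastic-ordering) interpretation, which your likelihood-ratio remark recovers and in fact strengthens to likelihood-ratio order. One sign typo to fix: integrating $\frac{d}{dt}[(1+t)^s e^{-at}]$ gives $sJ(a)-a\int_0^\infty(1+t)^s e^{-at}\,dt=-1$, not $+1$; your subsequent formula $K(a)=1+\beta\sqrt a\,J(a)$ is the correct consequence, as confirmed by its Halfin--Whitt limit $1+\beta\Phi(\beta)/\phi(\beta)$.
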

\begin{proof}
Consider the function  $g(t,a)=a t\,e^{-a \, t} \ (1+t)^{a- 1}$, with $a$, $t>0$.
It is non negative and $\int_0^\infty g(t,a) \ dt = 1$ so we can look at it as a density function of
a non negative random variable, say $X_a$.

Let $Y_a = (1+X_a)^{\sqrt{a}}$, we can express the Erlang C function, see (\ref{C}), in terms of
the inverse of the $\beta$ moments of the random variables $Y_a$, indeed
$$C(a+\beta\,\sqrt{a},a) = 1/\E[(1+X_a)^{\beta\sqrt{a}}] = 1/\E[Y_a ^\beta] \ ,$$
and we need to prove that the $\beta$ moments, with $\beta>0$ fixed, are strictly increasing in $a$.
Since the power function $(\cdot)^\beta$ is increasing, it is enough to prove that the family of
random variables $\{Y_a\}_{a>0}$ is increasingly stochastically ordered.

First we compute their density functions. Let $y(x) = (1+x)^{\sqrt{a}}$, we have that the inverse
$x(y)=y^{\frac{1}{\sqrt{a}}} -1$, with $x>0$ and $y>1$ and  $x'(y) =\frac{1}{\sqrt{a}}
y^{\frac{1}{\sqrt{a}}-1}$, that is positive for any $y>1$. 
It follows that the density function $f(t,a)$ of $Y_a$ is given by
$$f(y,a) = g(x(y),a) \ x'(y) = \sqrt{a} \  y^{\sqrt{a}-1} (y^{\frac{1}{\sqrt{a}}} - 1) e^{- a
(y^{\frac{1}{\sqrt{a}}}-1)} \ ,$$
while its tail distribution has the following expression
\begin{equation}\label{y.tail}
\bar F(y,a) = \Pr\{Y_a > y \} = \int_y^\infty f(t,a) \ dt = y^{\sqrt{a}} \
e^{-a(y^{\frac{1}{\sqrt{a}}}-1)}
\end{equation}
for $y>1$. An easy check that $Y_a$, with $a>0$, is indeed a random variable can be done by differentiating equation $(\ref{y.tail})$ with respect to $y$ and getting $-f(y,a)$. Having that $f(y,a)$ is non negative for $y \geq 1$ it follows that $\bar F(y,a)$ is a decreasing function in $y$, and the check is complete by noticing that $\bar F(y,a) \to 0$ as $y \to \infty$ and $\bar F(1,a)=1$. To prove that the stochastic order holds it is sufficient to show that the function $\bar F(y,a)$ is increasing in $a>0$. 

We can rewrite the tail distribution in the following way
\begin{eqnarray}\label{tF.2}
\bar F(y,a) 
&=& \exp\{(\log{y})^2 \, \frac{\sqrt{a}}{\log{y}}
	-(\log{y})^2 \, (\frac{\sqrt{a}}{\log{y}})^2(e^{\frac{\log{y}}{\sqrt{a}}}-1)\} \\
&=& \exp\{(\log{y})^2 \, h(\frac{\sqrt{a}}{\log{y}})\} \nonumber \,
\end{eqnarray}
with $h(x)=x+x^2 (1-e^{1/x})$. Since the exponential and the square root functions are increasing functions in their respective domains, we only need to show that $h(x)$ is an increasing function for $x>0$. Representing the exponential function by its Taylor series we have
\begin{equation}\label{eq:U}
h(x)
= x+x^2 \Big(1- \sum_{n=0}^\infty \frac{x^{-n}}{n!}\Big) 
= x - \sum_{n=1}^\infty \frac{x^{2-n}}{n!} 
= - \sum_{n=0}^\infty \frac{x^{-n}}{(n+2)!}
\end{equation}
and the result follows because it is a sum of increasing functions.
\end{proof}
As a final remark, we mention that the Halfin-Whitt regime can be obtained in another way as well. That is, instead of writing the number of servers as a function of the offered load, like in (\ref{eq:n.fun.a}), the latter is written as a function of the former in the following way
\begin{equation}\label{eq:a.fun.n}
a(n) = n - \beta \sqrt{n}
\end{equation}
where in general $\beta>0$, and the relation is valid for $n>\beta^2$.

A natural question is to ask if the monotonicity property satisfied by $C(s(a),a)$, $a>0$ is also valid for
$C(s,a(s))$, with  $s>\beta^2$. However the behaviour of the $C(s,a(s))$ looks more complicated as the following graphs show for two values of the parameter, $\beta=1/10$ and $\beta=3$.

\begin{minipage}{0.5\textwidth}\centering
\includegraphics[width=0.9\textwidth]{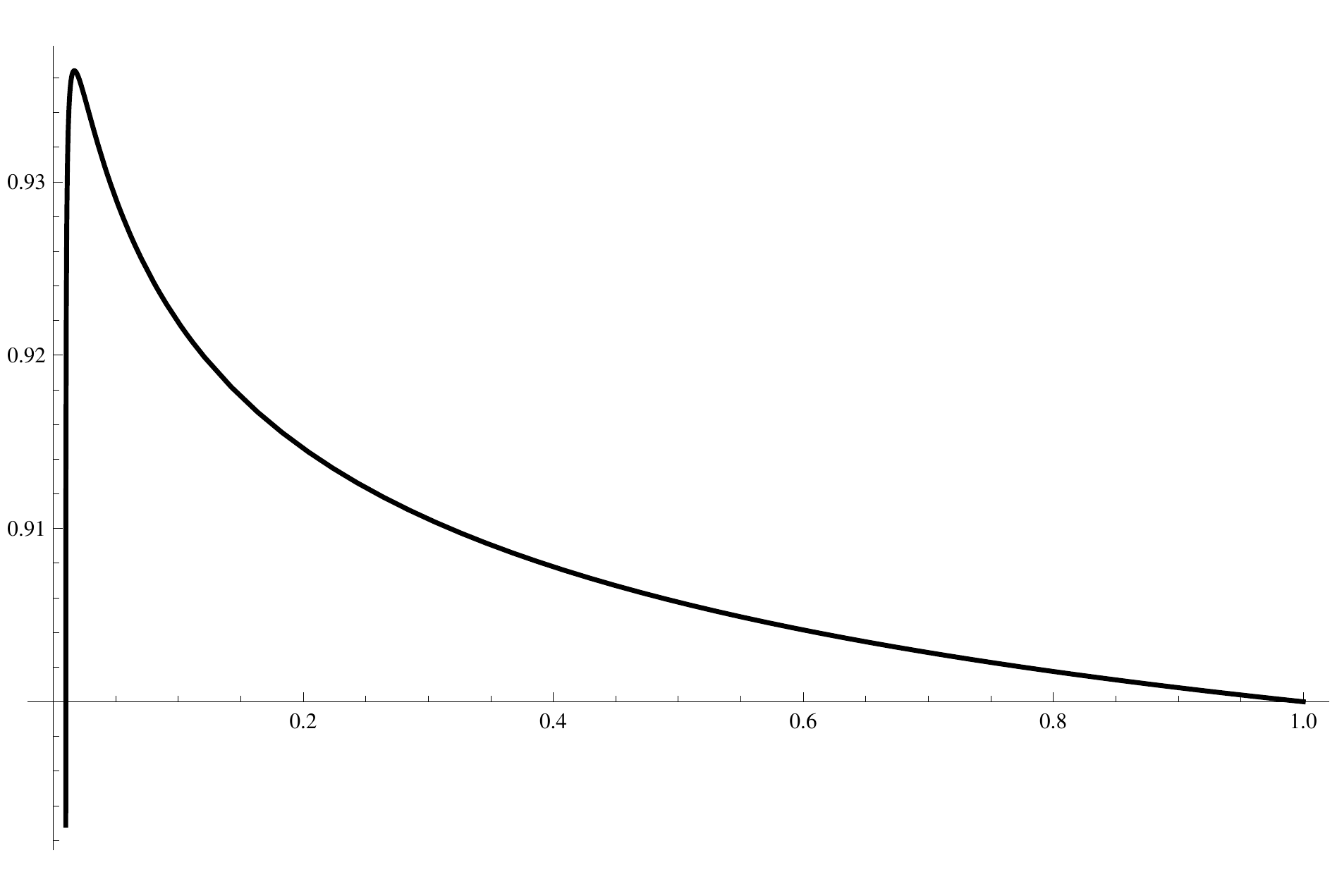}
$$C(s,s-\frac{1}{10}\sqrt{s})$$
\end{minipage}
\begin{minipage}{0.5\textwidth}\centering
\includegraphics[width=0.9\textwidth]{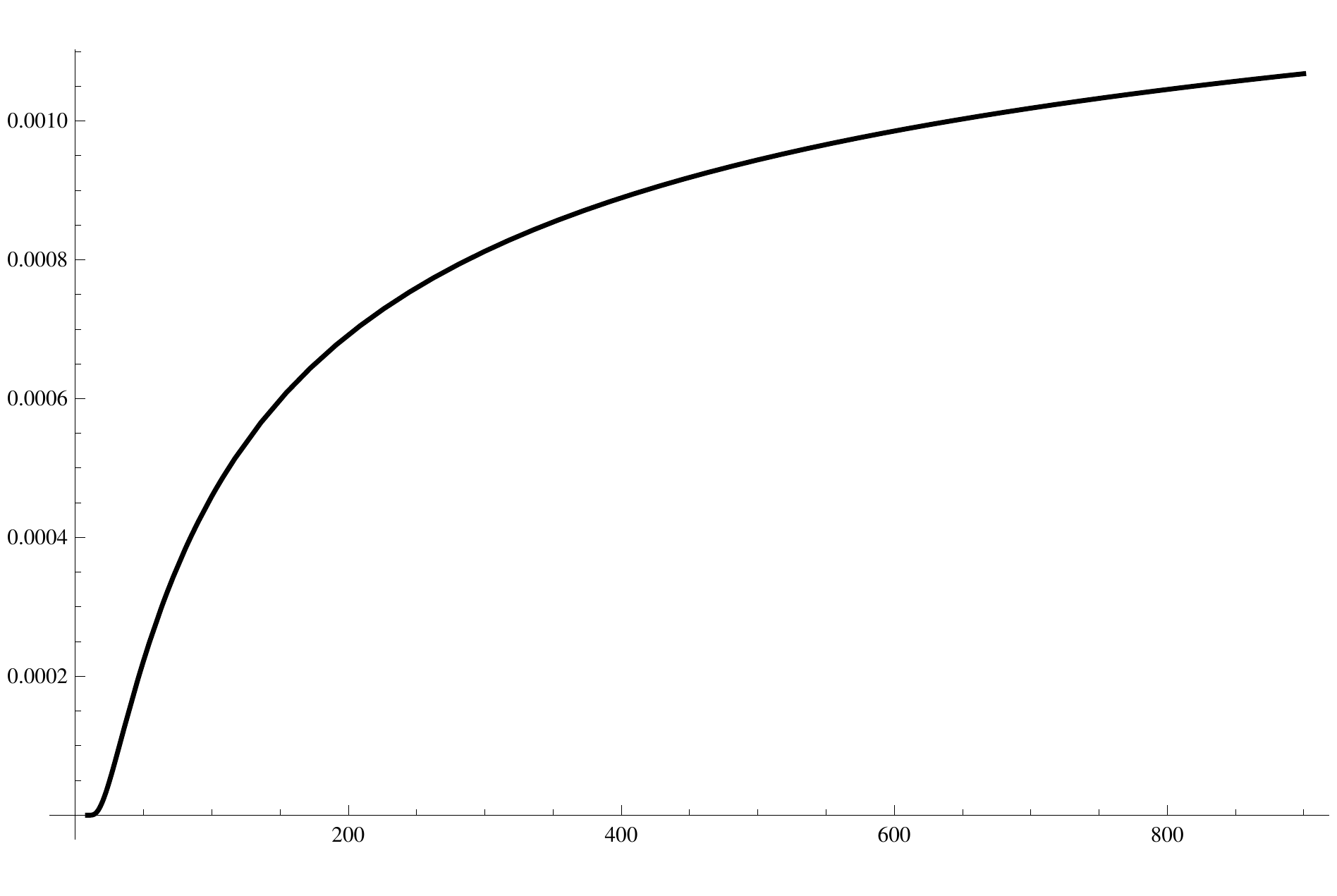}
$$C(s,s-3\sqrt{s})$$
\end{minipage}

% Bibliography
%%%%\bibliographystyle{../Bibliography/mystyle}
%\bibliography{../Bibliography/JabRef}

\begin{thebibliography}{5}
\providecommand{\natexlab}[1]{#1}
\providecommand{\url}[1]{\texttt{#1}}
\expandafter\ifx\csname urlstyle\endcsname\relax
  \providecommand{\doi}[1]{doi: #1}\else
  \providecommand{\doi}{doi: \begingroup \urlstyle{rm}\Url}\fi

\bibitem[Borst et~al.(2004)Borst, Mandelbaum, and
  Reiman]{borst:mandelbaum:reiman:2004}
S.~Borst, A.~Mandelbaum, and M.~I. Reiman.
\newblock Dimensioning large call centers.
\newblock \emph{Oper. Res.}, 52\penalty0 (1):\penalty0 17--34, 2004.

\bibitem[Halfin and Whitt(1981)]{halfin:whitt:1981}
S.~Halfin and W.~Whitt.
\newblock Heavy-traffic limits for queues with many exponential servers.
\newblock \emph{Oper. Res.}, 29\penalty0 (3):\penalty0 567--588, 1981.

\bibitem[Jagers and van Doorn(1986)]{jagers:vandoorn:1986}
A.A. Jagers and E.A. van Doorn.
\newblock On the continued {E}rlang loss function.
\newblock \emph{Oper. Res. Lett.}, 5\penalty0 (1):\penalty0 43--46, 1986.

\bibitem[Janssen et~al.(2009)Janssen, van Leeuwaarden, and
  Zwart]{janssen:vanleeuwaarden:zwart:2009}
A.J.E.M. Janssen, J.S.H. van Leeuwaarden, and B.~Zwart.
\newblock Refining square root safety staffing by expanding {E}rlang {C}.
\newblock \emph{Oper. Res.}, 2009.
\newblock To appear.

\bibitem[Janssen et~al.(2011)Janssen, van Leeuwaarden, and
  Zwart]{janssen:vanleeuwaarden:zwart:2011}
A.J.E.M. Janssen, J.S.H. van Leeuwaarden, and B.~Zwart.
\newblock A lower bound for the {E}rlang {C} formula in the {H}alfin--{W}hitt
  regime.
\newblock \emph{Queueing Syst. Theory Appl.}, 68:\penalty0 361--363, 2011.

\end{thebibliography}

\end{document}